\newtheorem{thm}{Theorem}[section]
\newtheorem{lem}[thm]{Lemma}
\theoremstyle{definition}
\newcommand{\n}{\mathbf n}
\newcommand{\R}{\mathbf R}
\newcommand{\dist}{\operatorname{dist}}
\newcommand{\dv}{\operatorname{div}}
\numberwithin{equation}{section}
\begin{document}

\title[Gradient maximum principle  for quasilinear parabolic equations]{On a gradient maximum principle  \\ for some quasilinear parabolic equations \\ on convex domains}
\author{Seonghak Kim}
       \address{Institute for Mathematical Sciences\\ Renmin University of China \\  Beijing 100872, PRC}
       \email{kimseo14@ruc.edu.cn}
\subjclass[2010]{35B50, 35B65, 35K20, 35K59}
\keywords{quasilinear parabolic equation, gradient maximum principle, convex domain,  Hopf's lemma, bootstrap of regularity}

\begin{abstract}
We establish a spatial gradient maximum principle for classical  solutions to the initial and Neumann boundary value problem of  some quasilinear parabolic equations on smooth convex  domains.
\end{abstract}
\maketitle

\section{Statement of main theorem}

In this note, we study the initial and Neumann boundary value problem of a quasilinear diffusion equation with a linear reaction term:
\begin{equation}\label{ibvp}
\begin{cases} u_t =\dv (\sigma(Du))-c(t)u &\mbox{in $\Omega\times (0,T]$},\\
{\partial u}/{\partial \n}=0 & \mbox{on $\partial \Omega\times (0,T]$},\\
u(x,0)=u_0(x) &\mbox{for $x\in \Omega$}.
\end{cases}
\end{equation}
Here, $\Omega\subset \R^n$  ($n\ge 1$)   is a bounded convex  domain with $C^{2}$ boundary, $T>0$ is any fixed number, $u=u(x,t)$ is the unknown function with $u_t$ and $Du=(u_{x_1},\cdots,u_{x_n})$ denoting its rate of change and spatial gradient respectively, $\n$ is the outer unit normal on $\partial\Omega$, $u_0\in C^{2}(\bar\Omega)$ is a  given initial function satisfying the compatibility condition:
\begin{equation}\label{ibvp-1}
{\partial u_0}/{\partial \n}=0\;\; \mbox{on $\partial \Omega,$}
\end{equation}
$c=c(t)\in W^{1,q_0}(0,T)$ is nonnegative for some $n+2<q_0<\infty$, and $\sigma\colon \R^n\to \R^n$ is given by $\sigma(p)=f(|p|^2)p$ ($p\in\R^n$) for some function $f\in C^3([0,\infty))$ fulfilling
\begin{equation*}
\lambda\le f(s)+2sf'(s)\le\Lambda\quad\forall s\ge 0,
\end{equation*}
where $\Lambda\ge \lambda >0$ are \emph{ellipticity} constants.  We easily have
\[
 \sigma^i_{p_j}(p)  = f(|p|^2)\delta_{ij} + 2f'(|p|^2) p_ip_j \quad (i,j=1,2,\cdots,n;\; p\in \R^n)
\]
and hence  the {\em uniform ellipticity} condition:
\begin{equation*}
 {\lambda} |q|^2 \le  \sum_{i,j=1}^n\sigma^i_{p_j}(p) q_iq_j\le \Lambda |q|^2\quad \forall\; p, \, q\in\R^n;
\end{equation*}
that is, (\ref{ibvp}) is a quasilinear uniformly parabolic problem  with conormal boundary condition.
Thus the existence, uniqueness and regularity of a classical solution $u$ to (\ref{ibvp}) follow from the standard theory such as in \cite[Theorem 13.24]{Ln} under suitable H\"older regularity assumptions on $u_0$ and $\partial\Omega$.

The main result of this note is  the following  theorem.

\begin{thm}[Gradient Maximum Principle]\label{thm:main}
If $u\in C^{2,1}(\bar\Omega_T)$ is a classical solution to problem (\ref{ibvp}), where $\Omega_T=\Omega\times(0,T]$, then it satisfies the \emph{gradient maximum principle:}
\begin{equation}\label{gmp-1}
\|Du\|_{L^\infty(\Omega_T)}=\|Du_0\|_{L^\infty(\Omega)}.
\end{equation}
\end{thm}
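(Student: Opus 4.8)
The plan is to run a Bernstein-type argument on the squared gradient $w:=|Du|^2$: I will show that $w$ is a subsolution of a linear uniformly parabolic operator with bounded lower-order coefficients, so that its maximum sits on the parabolic boundary, and then use convexity of $\Omega$ together with the Neumann condition to force that maximum to occur at the initial time.

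First I would differentiate the equation in (\ref{ibvp}) with respect to each $x_k$ and recombine. Writing $Lv=\sum_{i,j}\sigma^i_{p_j}(Du)\,v_{x_ix_j}$ and using $\sigma^i_{p_j}=f\delta_{ij}+2f'p_ip_j$, a direct computation gives
\begin{equation*}
w_t-Lw=-2\sum_{k=1}^n\sum_{i,j=1}^n\sigma^i_{p_j}(Du)\,u_{x_kx_i}u_{x_kx_j}+2S-2c(t)\,w,
\end{equation*}
where $S$ collects the terms carrying the third derivatives of $u$ after they are rewritten through the second derivatives of $\sigma$. The first sum is coercive by uniform ellipticity,
\begin{equation*}
\sum_{k}\sum_{i,j}\sigma^i_{p_j}(Du)\,u_{x_kx_i}u_{x_kx_j}\ge\lambda\,|D^2u|^2\ge0,
\end{equation*}
so it carries the favorable sign. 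The structural point, which uses the special form $\sigma(p)=f(|p|^2)p$, is that $Dw=2(D^2u)\,Du$, so every term of $S$ contains a factor $(D^2u)\,Du$ or $Du\cdot(D^2u)\,Du$ and can be written as $\mathbf b\cdot Dw$ for a field $\mathbf b$ assembled from $f',f''$, $Du$, and $D^2u$. Since $u\in C^{2,1}(\bar\Omega_T)$, this $\mathbf b$ is continuous, hence bounded, on the compact set $\bar\Omega_T$. With $c\ge0$ and $w\ge0$ this yields
\begin{equation*}
w_t-Lw-\mathbf b\cdot Dw\le 0\quad\text{in }\Omega_T,
\end{equation*}
and the weak parabolic maximum principle places $\max_{\bar\Omega_T}w$ on the parabolic boundary $(\bar\Omega\times\{0\})\cup(\partial\Omega\times(0,T])$.

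The heart of the matter is the behavior of $w$ on the lateral boundary, where convexity enters. Extending $\n$ smoothly near $\partial\Omega$ and differentiating the Neumann identity $Du\cdot\n=0$ in the tangential direction $\tau=Du$ (tangential precisely because $Du\cdot\n=0$), I obtain
\begin{equation*}
\frac{\partial w}{\partial\n}=2\big((D^2u)\,Du\big)\cdot\n=-2\,\mathrm{II}(Du,Du)\le0\quad\text{on }\partial\Omega\times(0,T],
\end{equation*}
where $\mathrm{II}$ is the second fundamental form of $\partial\Omega$, positive semidefinite exactly because $\Omega$ is convex. I then argue by contradiction: if $M:=\max_{\bar\Omega_T}w$ exceeded $\max_{\bar\Omega}|Du_0|^2$, let $t_*>0$ be the first time the value $M$ is attained, so $w<M$ for $t<t_*$. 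An interior maximizer at $t_*$ is excluded by the strong maximum principle, which would force $w\equiv M$ down to $t=0$; hence a maximizer lies on $\partial\Omega$. But the parabolic Hopf lemma (applicable by the interior sphere condition from the $C^2$ boundary) then gives $\partial w/\partial\n>0$ there, contradicting the inequality above. Therefore $M=\max_{\bar\Omega}|Du_0|^2$, and combined with the trivial bound $\|Du\|_{L^\infty(\Omega_T)}\ge\|Du_0\|_{L^\infty(\Omega)}$ from the slice $t=0$, this gives (\ref{gmp-1}).

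I expect the main obstacle to be the boundary step, not the interior one. The interior identity is a careful but routine calculation whose one decisive feature is the collapse of the third-derivative terms into $\mathbf b\cdot Dw$; the delicate points are fixing the sign of $\partial w/\partial\n$ from convexity (the correct orientation of $\n$ and of $\mathrm{II}$) and checking all hypotheses needed to invoke the strong maximum principle and Hopf's lemma for an operator whose lower-order coefficients are merely bounded and whose leading coefficients depend on the a priori unknown $Du$.
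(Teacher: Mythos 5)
Your overall strategy coincides with the paper's: form $w=|Du|^2$, derive a uniformly parabolic differential inequality for it, rule out an interior maximum above the initial level by the strong maximum principle, rule out a lateral-boundary maximum by the parabolic Hopf lemma, and contradict the sign $\partial w/\partial\n\le 0$ on $\partial\Omega$ coming from convexity (your second-fundamental-form computation is precisely the paper's Lemma \ref{lem-neumann}, which it cites from Alikakos--Rostamian and Kahane rather than reproving). However, there is a genuine gap, and it is exactly the point this paper exists to address: the theorem assumes only $u\in C^{2,1}(\bar\Omega_T)$. Every step of your interior computation --- differentiating the equation in $x_k$, forming $w_t$ and $w_{x_ix_j}$, and writing the ``collapse'' of third-derivative terms as $\mathbf b\cdot Dw$ --- presupposes that third spatial derivatives of $u$ (and $Du_t$) exist and are continuous. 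Under the stated hypothesis they are not known to exist, so your differential inequality $w_t-Lw-\mathbf b\cdot Dw\le 0$ is not justified; your remark that $\mathbf b$ ``is continuous, hence bounded, since $u\in C^{2,1}(\bar\Omega_T)$'' addresses only the boundedness of the coefficients, not the existence of the derivatives of $w$ that the inequality involves. Since $w$ is a priori merely $C^{1,0}(\bar\Omega_T)$, neither the weak nor the strong maximum principle, nor Hopf's lemma, can even be applied to it.

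The paper fills this gap with Lemma \ref{lem-ireg}, a bootstrap of \emph{interior} regularity: applying the $L^q$ theory of Lady\v{z}enskaja--Solonnikov--Ural'ceva (Theorem III.12.1 there) successively to $u_{x_k}$, then $u_{x_kx_l}$ and $u_t$, viewed as weak solutions of linear parabolic equations with continuous coefficients, one gets $u\in C^{3+\beta_0,\frac{3+\beta_0}{2}}(\Omega_T)$, hence $w\in C^{2,1}(\Omega_T)\cap C^{1,0}(\bar\Omega_T)$. This is precisely enough: the inequality (\ref{para-1}) then holds classically in the interior, while continuity of $Dw$ up to $\partial\Omega$ suffices both for Hopf's lemma and for the boundary sign of $\partial w/\partial\n$. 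Unless you prove such an interior regularity statement (or strengthen the hypothesis to $u\in C^{3,1}(\bar\Omega_T)$, which the paper deliberately avoids because it would force extra compatibility conditions such as (\ref{ibvp-3}) on $u_0$), your argument does not prove the theorem as stated. Once that regularity is in hand, the rest of your proof --- including the appeal to the weak maximum principle, which is in fact redundant given the contradiction argument that follows it --- is sound and matches the paper's.
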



Gradient estimates for parabolic equations are usually given as  {\em a priori} estimates  depending on the initial datum, domain and ellipticity constants. Our result, Theorem \ref{thm:main}, gives an estimate independent of the {\em convex} domain and ellipticity constants. In case of the heat equation ($f\equiv 1$ and $c\equiv 0$),   (\ref{gmp-1}) was proved in \cite{Ka} for  $C^{3,1}$ solutions  and convex $C^3$ domains. Theorem \ref{thm:main} extends such a result to a large class of uniformly parabolic equations for $C^{2,1}$ solutions and convex $C^2$ domains. It is also important to note that the convexity assumption on the domain $\Omega$ in our result cannot be dropped in general; see a counterexample in \cite[Theorem 4.1]{AR}. Also, we refer the reader to \cite{PW, PS} for more extensive studies on the maximum principles in elliptic and parabolic differential equations.

Our motivation of (\ref{gmp-1}) is in the application of its pure diffusion case ($c\equiv 0$) to the study of the Neumann problem of some forward-backward diffusion equations \cite{KY, KY1, KY2}.
Although the proof of Theorem \ref{thm:main} would become  much easier if $u$ belonged to $C^{3,1}(\bar\Omega_T)$, the existence of such a solution $u$  often requires the initial datum $u_0$ lie in $C^{3+\alpha}(\bar\Omega)$ for some $0<\alpha<1$ and satisfy, in addition  to (\ref{ibvp-1}), the second compatibility condition:
\begin{equation}\label{ibvp-3}
\partial (\dv(\sigma(Du_0)))/\partial  \n=0 \;\; \mbox{ on $\partial\Omega.$}
\end{equation}
These requirements give rise to a subtle but critical issue  on the application of the convex integration method for constructing infinitely many  Lipschitz   solutions to certain  forward-backward parabolic Neumann problems.  For example, dealing with Perona-Malik type equations in \cite{KY}, condition (\ref{ibvp-3}) was posted for nonconstant radial initial data $u_0\in C^{3+\alpha}(\bar\Omega)$ when $\Omega$ is a ball. Also, an earlier version of the main existence  theorem in \cite{KY1} for the Perona-Malik equation  assumed that  initial data $u_0\in C^{3+\alpha}(\bar\Omega)$  with compatibility conditions (\ref{ibvp-1}) and (\ref{ibvp-3}) satisfy some technical restrictions, which cannot handle  the cases with  $\|Du_0\|_{L^\infty(\partial\Omega)}\ge 1$ or with $0<\|Du_0\|_{L^\infty(\partial\Omega)}< 1$ and $\|Du_0\|_{L^\infty(\Omega)}\ge \|Du_0\|_{L^\infty(\partial\Omega)}^{-1}.$   Our main result of this note  removes these requirements and restrictions on nonconstant  initial data $u_0$:  the only requirement  is that initial data $u_0\in C^{2+\alpha}(\bar\Omega)$ fulfill (\ref{ibvp-1}).

Another purpose of studying (\ref{gmp-1}) (when $c\equiv 0$) is to confirm the validity of \cite[Theorem 6.1]{KK} for \emph{convex} domains.  It has been a general belief that the initial-Neumann boundary value problem of a forward-backward parabolic equation in \cite{KK}  admits a unique global \emph{classical} solution if the initial datum $u_0\in C^{2+\alpha}(\bar\Omega)$ satisfies (\ref{ibvp-1}) and $\|Du_0\|_{L^\infty(\Omega)}<s_0$, where $s_0>0$ is the threshold at which the forward parabolicity of governing equation turns into the backward one. Regarding this, many authors often reported that such a problem is well-posed for \emph{subcritical} (or \emph{subsonic}) initial data. However, the  proof of \cite[Theorem 6.1]{KK} on such a result should be based on the gradient maximum principle (\ref{gmp-1}) for a modified uniformly parabolic problem of type (\ref{ibvp}), and so the convexity of the domain $\Omega$ should not be overlooked in the proof as pointed out above.

We finish this section with some comments on notations. We mainly follow the notations in the book \cite{LSU} for function spaces, with one exception  that the letter $C$ is used instead of $H$ regarding suitable (parabolic) H\"older spaces.
For integers $k,l\ge 0$ with $2l\le k$, we denote by $C^{k,l}(\bar\Omega_T)$ [resp. $C^{k,l}(\Omega_T)$] the space of functions $u\in C^0(\bar\Omega_T)$ [$u\in C^0(\Omega_T)$] such that $D^a_x D^j_t u\in C^0(\bar\Omega_T)$ [$D^a_x D^j_t u\in C^0(\Omega_T)$] for all multiindices $|a|\le k$ and integers $0\le j\le l$ with $|a|+2j\le k$.
We also adopt the summation convention that  repeated indices in a term represent the sum.

\section{Proof of main theorem}

We follow the notations and assumptions of Theorem \ref{thm:main}  and introduce two useful lemmas.
The convexity assumption on the domain $\Omega$ enters into the result (\ref{gmp-1}) through the following lemma from \cite[Lemma 2.1]{AR} or from \cite[Theorem 2]{Ka}; we do not reproduce the proof here.

\begin{lem}\label{lem-neumann}
Let $\Omega\subset\R^n$ be a bounded convex domain with $\partial\Omega$ of class $C^2$. If $w\in C^2(\bar\Omega)$ satisfies
$\partial w/\partial \n=0$ on $\partial\Omega,$
then $\partial(|Dw|^2)/\partial\n\le 0$ on $\partial\Omega.$
\end{lem}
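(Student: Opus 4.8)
The plan is to verify the inequality pointwise on $\partial\Omega$, exploiting that the Neumann condition forces $Dw$ to be tangent to the boundary. Fix $x_0\in\partial\Omega$ and choose an orthonormal frame $e_1,\dots,e_{n-1},\n$ at $x_0$, where $e_1,\dots,e_{n-1}$ span the tangent space of $\partial\Omega$ and $\n$ is the outer unit normal. Since $Dw\cdot\n=0$ on $\partial\Omega$, the gradient has no normal component at $x_0$, so I may write $Dw(x_0)=\sum_{\alpha=1}^{n-1}a_\alpha e_\alpha$ with $a_\alpha=Dw(x_0)\cdot e_\alpha$. A short computation with $|Dw|^2=w_{x_k}w_{x_k}$ then gives, at $x_0$,
\begin{equation*}
\frac{\partial(|Dw|^2)}{\partial\n}=2\,(D^2w\,\n)\cdot Dw=2\sum_{\alpha=1}^{n-1}a_\alpha\,\langle D^2w\,\n,\,e_\alpha\rangle,
\end{equation*}
so the whole matter reduces to controlling the mixed Hessian entries $\langle D^2w\,\n,e_\alpha\rangle$.

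To handle these, I would differentiate the boundary identity $Dw\cdot\n\equiv0$ in the tangential directions. As $\partial\Omega$ is of class $C^2$, the outer normal field is $C^1$ along $\partial\Omega$, and differentiating along $e_\alpha$ yields, at $x_0$,
\begin{equation*}
0=\partial_{e_\alpha}(Dw\cdot\n)=\langle D^2w\,e_\alpha,\,\n\rangle+Dw\cdot(\partial_{e_\alpha}\n).
\end{equation*}
Differentiating $|\n|\equiv1$ shows that $\partial_{e_\alpha}\n$ is tangential, and $\langle\partial_{e_\alpha}\n,e_\beta\rangle$ is exactly the second fundamental form $\mathrm{II}(e_\alpha,e_\beta)$ of $\partial\Omega$ with respect to the outer normal. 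Using the symmetry of $D^2w$ and the tangential expansion of $Dw(x_0)$, this rewrites as $\langle D^2w\,\n,e_\alpha\rangle=-\sum_{\beta=1}^{n-1}a_\beta\,\mathrm{II}(e_\alpha,e_\beta)$.

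Feeding this back into the first display, I expect to obtain
\begin{equation*}
\frac{\partial(|Dw|^2)}{\partial\n}(x_0)=-2\sum_{\alpha,\beta=1}^{n-1}a_\alpha a_\beta\,\mathrm{II}(e_\alpha,e_\beta)=-2\,\mathrm{II}\bigl(Dw(x_0),Dw(x_0)\bigr).
\end{equation*}
This is precisely where convexity is used: for a convex domain the second fundamental form relative to the outer unit normal is positive semidefinite (as one already sees on a ball of radius $R$, where $\mathrm{II}(X,X)=|X|^2/R\ge0$). Hence the right-hand side is nonpositive, and since $x_0\in\partial\Omega$ was arbitrary, $\partial(|Dw|^2)/\partial\n\le0$ on $\partial\Omega$, as claimed.

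The main obstacle is to make the tangential differentiation of $Dw\cdot\n\equiv0$ fully rigorous, since this identity is available only on $\partial\Omega$ rather than in an open neighborhood: one must either extend $\n$ to a $C^1$ vector field near $\partial\Omega$ --- which the $C^2$ regularity of the boundary permits --- or carry out the computation intrinsically through the shape operator, while keeping $D^2w$ as the genuine ambient Hessian, which is legitimate because $w\in C^2(\bar\Omega)$. Once the identification of the mixed Hessian entries with the second fundamental form is in place, the only remaining ingredient is the sign of $\mathrm{II}$ on convex domains, which is classical.
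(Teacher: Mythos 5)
Your proof is correct. One point of comparison: the paper does not prove this lemma at all---it imports it from \cite[Lemma 2.1]{AR} and \cite[Theorem 2]{Ka} and explicitly declines to reproduce the argument---so your proposal supplies the proof rather than paralleling one in the text. The argument you give is precisely the classical one behind those references: the Neumann condition makes $Dw$ tangential; tangential differentiation of $Dw\cdot\n\equiv 0$ along $\partial\Omega$ converts the mixed Hessian entries $\langle D^2w\,\n,e_\alpha\rangle$ into second-fundamental-form terms, giving $\partial(|Dw|^2)/\partial\n=-2\,\mathrm{II}(Dw,Dw)$; and convexity of $\Omega$ makes $\mathrm{II}$ (taken with respect to the outer normal) positive semidefinite. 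Your treatment of the one genuinely delicate step---that $Dw\cdot\n=0$ holds only on $\partial\Omega$ and so may be differentiated only tangentially, either along curves in $\partial\Omega$ or after extending $\n$ to a $C^1$ field near the boundary, which the $C^2$ regularity of $\partial\Omega$ permits---is also the right one, so no gap remains.
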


The next lemma  gives an improved interior regularity of the solution $u\in C^{2,1}(\bar\Omega_T)$ to problem (\ref{ibvp}) that enables us to apply classical Hopf's Lemma for parabolic equations in a suitable setup. Its proof is postponed until the end of this section.

\begin{lem}\label{lem-ireg}
One has
\begin{equation}\label{lem-ireg-1}
u\in C^{3+\beta_0,\frac{3+\beta_0}{2}}(\Omega_T)
\end{equation}
for some $0<\beta_0<1$.
\end{lem}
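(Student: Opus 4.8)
The plan is to bootstrap the interior regularity of $u$ by differentiating equation (\ref{ibvp}) once in space and then applying, in turn, the interior $L^p$ and Schauder theories for linear parabolic equations. Since the assertion concerns only the open cylinder $\Omega_T$, the Neumann condition and the geometry of $\partial\Omega$ never enter; all estimates are localized to interior parabolic subcylinders by standard cutoff arguments. Throughout I would write the equation in non-divergence form as
\begin{equation*}
u_t=a_{ij}(Du)\,u_{x_ix_j}-c(t)u\quad\text{in }\Omega_T,\qquad a_{ij}(p):=\sigma^i_{p_j}(p),
\end{equation*}
which is uniformly elliptic with the same constants $\lambda,\Lambda$. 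Because $u\in C^{2,1}(\bar\Omega_T)$, the coefficients $a_{ij}(Du)$ are continuous, and since $c\in W^{1,q_0}(0,T)$ with $q_0>n+2$, the one-dimensional Sobolev embedding gives $c\in C^{1-1/q_0}([0,T])$; in particular $c$ is Hölder in $t$ with exponent exceeding $\tfrac12$.

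First I would differentiate the equation in $x_k$ and check that $v:=u_{x_k}$ satisfies the linear parabolic equation
\begin{equation*}
v_t-a_{ij}(Du)\,v_{x_ix_j}=g_k,\qquad g_k:=\sigma^i_{p_jp_l}(Du)\,u_{x_lx_k}\,u_{x_ix_j}-c(t)\,v.
\end{equation*}
As $u$ is only $C^{2,1}$, this identity cannot be justified by differentiating three times; instead I would derive it in the weak sense through difference quotients $\delta^h_k u=(u(\cdot+he_k,\cdot)-u)/h$ applied to the divergence form of (\ref{ibvp}), using uniform ellipticity to pass to the limit. With $u\in C^{2,1}(\bar\Omega_T)$ the right-hand side $g_k$ is continuous, hence locally bounded. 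Treating $a_{ij}(Du)$ as continuous, uniformly elliptic coefficients and $g_k\in L^\infty_{\mathrm{loc}}\subset L^p_{\mathrm{loc}}$, the interior parabolic $L^p$ (Calderón--Zygmund) estimates of \cite{LSU} yield $v\in W^{2,1}_{p,\mathrm{loc}}(\Omega_T)$ for every $p<\infty$. Choosing $p>n+2$ and invoking the parabolic Sobolev embedding, I obtain $v\in C^{1+\alpha,\frac{1+\alpha}{2}}_{\mathrm{loc}}(\Omega_T)$ with $\alpha=1-(n+2)/p\in(0,1)$; since $k$ is arbitrary, this means $Du\in C^{1+\alpha,\frac{1+\alpha}{2}}_{\mathrm{loc}}$ and, in particular, $D^2u\in C^{\alpha,\frac{\alpha}{2}}_{\mathrm{loc}}(\Omega_T)$.

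With this gain in hand I would run one pass of interior Schauder on the same equation for $v$. Now $Du$ is parabolically Hölder, so the principal coefficients $a_{ij}(Du)$ lie in $C^{\alpha,\frac{\alpha}{2}}_{\mathrm{loc}}$; moreover each factor of $g_k$ is parabolically Hölder---$\sigma^i_{p_jp_l}(Du)$ and $u_{x_lx_k}u_{x_ix_j}$ because $f\in C^3$ and $D^2u\in C^{\alpha,\frac{\alpha}{2}}_{\mathrm{loc}}$, and $c(t)v$ because $c\in C^{1-1/q_0}$ dominates the required time-Hölder exponent---so that $g_k\in C^{\beta_0,\frac{\beta_0}{2}}_{\mathrm{loc}}(\Omega_T)$ with $\beta_0:=\alpha$. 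The interior parabolic Schauder estimates of \cite{LSU} then upgrade $v$ to $C^{2+\beta_0,\frac{2+\beta_0}{2}}_{\mathrm{loc}}(\Omega_T)$ for each $k$, which is exactly (\ref{lem-ireg-1}): the spatial third derivatives $D^3u$ and the mixed derivatives $D_tDu=v_t$ are $C^{\beta_0,\frac{\beta_0}{2}}_{\mathrm{loc}}$, while the remaining lower-order time derivatives are recovered from the equation itself.

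I expect the principal obstacle to be the rigorous justification of the differentiated equation for $v=u_{x_k}$ under the weak hypothesis $u\in C^{2,1}$: one must argue via difference quotients and the divergence structure of (\ref{ibvp}) that $v$ is an admissible weak solution of the stated linear equation before the linear $L^p$ theory can be applied, and then track with care that the composite quantities $a_{ij}(Du)$, $\sigma^i_{p_jp_l}(Du)$ and the quadratic terms in $D^2u$ retain the Hölder regularity needed at each stage. By contrast, the limited time-regularity of $c$ is harmless, since $q_0>n+2$ already makes $c$ more than Hölder-$\tfrac12$ in $t$.
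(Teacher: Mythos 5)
Your proposal is correct in substance, but after the first step it takes a genuinely different route from the paper. Both arguments begin the same way: differentiate the equation once in $x_k$, observe that $v=u_{x_k}$ is a $V^{1,0}_2$ weak solution of a linear divergence-form parabolic equation with continuous coefficients (the paper verifies this directly from $u\in C^{2,1}(\bar\Omega_T)$ rather than via difference quotients, but either works), and invoke the interior regularity theorem \cite[Theorem III.12.1]{LSU} --- which is exactly the weak-to-strong mechanism you correctly flag as necessary --- to get $v\in W^{2,1}_{q,\mathrm{loc}}$ for all $q<\infty$. From there the paper stays entirely inside $L^q$ theory: it differentiates the equation \emph{again} in $x_l$ and in $t$, shows that $u_{x_kx_l}$ and $u_t$ are weak solutions of further divergence-form equations, applies the same theorem to get fourth-order spatial derivatives and $u_{tt}$ in $L^{q_0}$ locally, and only then uses the parabolic Sobolev embedding once, from $W^{4,2}_{q_0}$ into $C^{3+\beta_0,\frac{3+\beta_0}{2}}$. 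You instead embed early ($W^{2,1}_p$ with $p>n+2$ gives $D^2u\in C^{\alpha,\alpha/2}_{\mathrm{loc}}$) and then run a single interior Schauder pass on the same equation for $v$. Your route is shorter (one spatial differentiation instead of two, and no time-differentiated equation), whereas the paper's route uses only one linear theorem repeatedly and never needs Schauder estimates; the price you pay is needing two different linear theories and the final bookkeeping step of recovering the time-regularity of $u_t$ from the equation itself. On that last point there is a small untracked constraint: the required Hölder-$\frac{1+\beta_0}{2}$ continuity of $u_t$ in time is limited by the term $c(t)u$, whose time-Hölder exponent is only $1-1/q_0$, so you must take $\beta_0\le 1-2/q_0$ rather than $\beta_0=\alpha$ with $\alpha$ arbitrarily close to $1$; this is harmless since the lemma only asks for \emph{some} $\beta_0\in(0,1)$ (the paper's own exponent satisfies $\beta_0<1-\frac{n+2}{q_0}$, which respects the same cap), but it should be stated.
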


We  now prove Theorem \ref{thm:main} based on the two lemmas above.

\begin{proof}[Proof of   Theorem \ref{thm:main}]

Let $v=|Du|^2$ on $\bar\Omega_T.$ By Lemma \ref{lem-ireg},   $v \in C^{1,0}(\bar\Omega_T)\cap C^{2,1}(\Omega_T).$ We compute, within $\Omega_T$,
\[
\Delta v =2 Du \cdot D(\Delta u) + 2 |D^2u|^2,
\]
\[
u_t= \dv (f(v)Du)-cu = f'(v) Dv \cdot Du + f(v) \Delta u- cu,
\]
\[
\begin{split}
Du_t =& f''(v) (Dv\cdot Du) Dv + f'(v) (D^2u)  Dv \\
&+ f'(v) (D^2v) Du + f'(v) (\Delta u) Dv + f(v) D(\Delta u) - c Du.\end{split}
\]
From these equations, using $v_t= 2Du\cdot Du_t$, we obtain
\begin{equation}\label{para-1}
v_t- \mathcal L (v) - V\cdot Dv =-2f(v) |D^2u|^2 - 2c |Du|^2\le 0 \;\;\mbox{in}\;\;\Omega_T,
\end{equation}
where $\mathcal L(v)$ and $V$ are defined by
\[
\mathcal L(v)= f(|Du|^2)\Delta v + 2 f'(|Du|^2) Du\cdot (D^2 v)Du,
\]
\[
V= 2f''(v) (Dv\cdot Du)Du   + 2 f'(v) (D^2u)  Du +
 2 f'(v) (\Delta u)  Du.
\]
Set  $\mathcal L (v)= a_{ij} v_{x_ix_j}$ with   coefficients $a_{ij}=a_{ij}(x,t)$, given by
\[
a_{ij}=\sigma^i_{p_j}(Du)=f(|Du|^2)\delta_{ij} + 2 f'(|Du|^2)u_{x_i}u_{x_j} \quad  (i,j=1,\cdots,n).
\]
Then, on $\bar\Omega_T$, all eigenvalues of the matrix $(a_{ij})$ lie in $[\lambda,\Lambda]$.

We now  show
\[
 \max_{(x,t)\in\bar\Omega_T}v(x,t) = \max_{x\in \bar\Omega} v(x,0),
\]
which completes the proof. We argue by contradiction; suppose
\begin{equation}\label{claim-1}
M:=\max_{(x,t)\in\bar\Omega_T}v(x,t) > \max_{x\in \bar\Omega} v(x,0).
\end{equation}
  Let  $(x_0,t_0)\in\bar\Omega_T$ with $v(x_0,t_0)=M;$ then $t_0>0.$ If $x_0\in \Omega$, then the strong maximum principle \cite{Ev} applied to (\ref{para-1})  would imply  that $v$ is constant on $\bar\Omega\times[0,t_0],$ which yields  $v(x,0)\equiv M$ on $\bar\Omega$, a contradiction to (\ref{claim-1}). Consequently, $x_0\in \partial\Omega$ and thus $v(x_0,t_0)=M>v(x,t)$ for all $(x,t)\in \Omega_T.$   We can then apply Hopf's Lemma for parabolic equations \cite{PW}  to (\ref{para-1}) to deduce $
 \partial v(x_0,t_0) /\partial \n >0,$
which contradicts  the conclusion of Lemma \ref{lem-neumann}.
 \end{proof}

We finally give the proof of Lemma \ref{lem-ireg}, although it may be well known to the experts in regularity theory.

\begin{proof}[Proof of  Lemma \ref{lem-ireg}]
We rely on \cite[Theorem III.12.1]{LSU} for the bootstrap of interior regularity for the solution $u\in C^{2,1}(\bar\Omega_T)$ to problem (\ref{ibvp}).
We divide the proof into several steps.

1. In $\Omega_T$,
\begin{equation}\label{ireg-1}
u_t=\dv (f(|Du|^2)Du)-cu =a_{ij}u_{x_i x_j}-cu,
\end{equation}
where $a_{ij}=\sigma^i_{p_j}(Du)=f(|Du|^2)\delta_{ij}+2f'(|Du|^2)u_{x_i}u_{x_j} \in C^{1,0}(\bar\Omega_T)$ and $c\in W^{1,q_0}(0,T)\subset C^{\alpha_0}([0,T])$ with $n+2<q_0<\infty$ and $\alpha_0:=1-1/q_0$. Note that the uniform ellipticity holds:
\begin{equation}\label{ireg-2}
 {\lambda}|\xi|^2\le a_{ij}(x,t)\xi_i\xi_j\le\Lambda|\xi|^2 \quad  \forall (x,t)\in\bar\Omega_T,\; \forall\xi\in\R^n.
\end{equation}

2. Fix an index $k\in\{1,\cdots,n\}$, and set $v=u_{x_k}\in C^{1,0}(\bar\Omega_T)$. Differentiating (\ref{ireg-1}) \emph{formally} with respect to $x_k$, we have
\begin{equation}\label{ireg-3}
v_t-\frac{\partial}{\partial x_j}(a_{ij}v_{x_i})+b_i v_{x_i}+cv =g,
\end{equation}
where
\begin{equation}\label{ireg-4}
b_i= (a_{ij})_{x_j},\; g= (a_{ij})_{x_k}u_{x_i x_j}\in C^{0}(\bar\Omega_T),\;\; c\in C^{\alpha_0}([0,T]).
\end{equation}
The membership (\ref{ireg-4}) easily verifies the admissible criteria (1.3)--(1.6)  in Chapter III of \cite{LSU} for coefficients and free term of equation (\ref{ireg-3}). It is also easy to see that $v\in  V^{1,0}_2(\Omega_T)$ is a weak (or generalized) solution to (\ref{ireg-3}) in the sense of \cite{LSU}.

To check some additional conditions in \cite[Theorem III.12.1]{LSU}, we rewrite equation (\ref{ireg-3}) in non-divergence form:
\begin{equation}\label{ireg-5}
v_t-a_{ij}v_{x_ix_j}+cv =g.
\end{equation}
Choose any $n+2<q<\infty$. From $a_{ij}\in C^{1,0}(\bar\Omega_T)$ and (\ref{ireg-4}), it follows that $a_{ij}$'s are bounded and continuous in $\Omega_T$, that $||c||_{L^q(\Omega\times(t,t+\tau))}\to 0$ as $\tau\to 0$ for each $t\in(0,T)$, and that $g\in L^q(\Omega_T)$; that is, coefficients and free term of equation (\ref{ireg-5}) fulfill the conditions in \cite[Theorem IV.9.1]{LSU} associated to the chosen number $q$.

With (\ref{ireg-2}), we can now apply \cite[Theorem III.12.1]{LSU} to obtain that weak derivatives $v_t,\,v_{x_i x_j}\,(i,j=1,\cdots,n)$ exist and belong to $L^q(Q)$ for all $1\le q<\infty$ and domains $Q\subset\Omega_T$ with $\dist(Q,\Gamma_T)>0$, where $\Gamma_T=\bar\Omega_T\setminus\Omega_T$ is the parabolic boundary of $\Omega_T$.

3. Fix any $\epsilon>0$ sufficiently small, and let
\[
\Omega^\epsilon=\{x\in\Omega\,:\,\dist(x,\partial\Omega)>\epsilon\},\;\;\Omega_T^\epsilon =\Omega^\epsilon\times(\epsilon,T].
\]
Also, fix any two indices $k,l\in\{1,\cdots,n\}$, and set $w=u_{x_k x_l}\in C^{0}(\bar\Omega_T^\epsilon)$. Then by  Step 2, we have $w\in V^{1,0}_2(\Omega_T^\epsilon)$.
Taking \emph{formal} derivative of (\ref{ireg-3}) in terms of $x_l$, we have
\begin{equation}\label{ireg-6}
w_t-\frac{\partial}{\partial x_j}(a_{ij} w_{x_i})+b_i w_{x_i}+cw =h,
\end{equation}
where
\[
h= (a_{ij})_{x_k x_l}u_{x_i x_j}+(a_{ij})_{x_k}u_{x_i x_j x_l}+(a_{ij})_{x_l}u_{x_i x_j x_k}.
\]
Since $f\in C^3([0,\infty))$, Step 2 implies
\begin{equation}\label{ireg-7}
h\in L^q(\Omega_T^\epsilon)\;\;\forall 1\le q<\infty.
\end{equation}
Observe that coefficients of equation (\ref{ireg-6}) are the same as those of equation (\ref{ireg-3}). Thus as in Step 2, with (\ref{ireg-7}), we see that the admissible criteria (1.3)--(1.6)  in Chapter III of \cite{LSU} are satisfied by coefficients and free term of equation (\ref{ireg-6}). Also, $w\in  V^{1,0}_2(\Omega_T^\epsilon)$ is a weak  solution to (\ref{ireg-6}).

As in Step 2, we also rewrite equation (\ref{ireg-6}) in non-divergence form:
\begin{equation}\label{ireg-8}
w_t-a_{ij}w_{x_ix_j}+cw=h.
\end{equation}
Likewise, coefficients of (\ref{ireg-8}) are equal to those of (\ref{ireg-5}), and free term $h$ satisfies (\ref{ireg-7}).

Again with (\ref{ireg-2}), it follows from \cite[Theorem III.12.1]{LSU} that weak derivatives $w_t,\,w_{x_i x_j}$ $(i,j=1,\cdots,n)$ exist and belong to $L^q(Q)$ for all $1\le q<\infty$ and domains $Q\subset\Omega_T^\epsilon$ with $\dist(Q,\Gamma_T^\epsilon)>0$, where $\Gamma_T^\epsilon=\bar\Omega_T^\epsilon\setminus\Omega_T^\epsilon$ is the parabolic boundary of $\Omega_T^\epsilon$.

4. Set $\tilde w=u_t\in C^0(\bar\Omega_T^\epsilon)$. By Step 2, we have $\tilde w\in V^{1,0}_2(\Omega_T^\epsilon)$.
Differentiating (\ref{ireg-1}) \emph{formally} with respect to $t$,
\begin{equation*}
\tilde w_t - \frac{\partial}{\partial x_j}(a_{ij}\tilde w_{x_i}) + b_i\tilde w_{x_i}+c\tilde w=\tilde h,
\end{equation*}
where
\[
\tilde h=(a_{ij})_t u_{x_i x_j}-c' u.
\]
From   Step 2 and $c'\in L^{q_0}(0,T)$, we have
\begin{equation*}
\tilde h \in L^{q_0}(\Omega_T^\epsilon).
\end{equation*}

As above, we obtain from \cite[Theorem III.12.1]{LSU} that $\tilde w_t=u_{tt}$ exists and belongs to $L^{q_0}(Q)$ for all domains $Q\subset\Omega_T^\epsilon$ with $\dist(Q,\Gamma_T^\epsilon)>0$.

5. By Steps 2--4, we   conclude that
\[
u\in W^{4,2}_{q_0}(\Omega_T^{2\epsilon})\;\;\forall\epsilon>0.
\]
By the parabolic Sobolev embedding theorem \cite[Lemma II.3.3]{LSU}, we obtain
\[
u\in C^{3+\beta_0,\frac{3+\beta_0}{2}}(\Omega_T),
\]
where $0<\beta_0<1-\frac{n+2}{q_0}$; hence (\ref{lem-ireg-1}) holds.
\end{proof}
\vspace{2ex}

{\bf Acknowledgments.}  The author would like to thank Professor Baisheng Yan and the referee for many helpful comments and suggestions to improve the presentation of the paper.

\end{document}